\begin{document}

\theoremstyle{plain}
\newtheorem{theorem}{Theorem}
\newtheorem{corollary}[theorem]{Corollary}
\newtheorem{lemma}{Lemma}
\newtheorem*{lemma*}{Lemma}
\newtheorem{proposition}[theorem]{Proposition}

\theoremstyle{definition}
\newtheorem{definition}[theorem]{Definition}
\newtheorem{example}[theorem]{Example}
\newtheorem{conjecture}[theorem]{Conjecture}

\theoremstyle{remark}
\newtheorem{remark}{Remark}
\begin{center}
{\Large{{\bf Normalized Gompertz wavelets and their application  }}}
\medskip
\end{center}
\leftline{}
\leftline{\bf Grzegorz Rz\c{a}dkowski }

\leftline{Department of Finance and Risk Management, Warsaw University of Technology, Narbutta 85, 02-524 Warsaw, Poland}
\leftline{e-mail: grzegorz.rzadkowski@pw.edu.pl}

\newcommand{\Eulerian}[2]{\genfrac{<}{>}{0pt}{}{#1}{#2}}

\begin{abstract}
In the present paper, we define the Gompertz wavelets and show their basic properties. In particular, we prove that the admissibility condition holds for them. We also compute the normalizing factors in the space of square intergrable functions  $L^{2}(\mathbb{R})$ and present an explicit formula for them in terms of the Bernoulli numbers. Then, after implementing the second-order Gompertz wavelets into Matlab's Wavelet Toolbox, we apply them to study the spread of the Covid-19 pandemic in Saudi Arabia.
\end{abstract}

Keywords:  Gompertz function, Gompertz wavelet, Stirling number of the second kind, Covid-19, Continuous Wavelet Transform.

2020 Mathematics Subject Classification: 42C40, 65T60, 11B83

\section{Introduction} 

The Gompertz function is described by the following autonomous differential equation of the first order
\begin{equation}\label{2a}
x'(t)=sx\log\frac{x_{max}}{x} \quad \quad  x(0)=x_0>0, 
\end{equation}
with parameters $s-$growth rate and $x_{max}-$saturation level (asymptote), $0<x_0<x_{max}$;  $\log$ is the natural logarithm.  After solving (\ref{2a}) we can write the Gompertz function in the following  convenient form
\begin{equation}\label{2b}
x(t)=x_{max}e^{-e^{-s(t-t_0)}}, 
\end{equation}
where constant $t_0$ appears in the integration process of (\ref{2a}) and is connected with the initial condition $x(0)=x_0=x_{max}e^{-e^{st_0}}$, thus 
$t_0=\frac{1}{s}\log\log (x_{max}/x_0)$. It is easy to check that $t_0$ is also the inflection point of $x(t)$ (\ref{2b}).

Function (\ref{2b}) was first described and applied in actuarial mathematics in 1825 by Mr. Benjamin Gompertz \cite{G}.  Since then, the Gompertz function has found applications in probability theory (Gumbel distribution), biology, medicine, economics, engineering, physics and many other fields. The first hundred years of the use of this function are well described by Winsor \cite{Wi}. The interesting story of the next almost one hundred years can be found in the article by Tjørve and Tjørve \cite{TT}.

In recent years, many articles have appeared in which the Gompertz function was used to describe Covid-19 cases (infected people who were tested positive). Ohnishi et al \cite{ONF} showed that the first waves of Covid-19 cases in 11 selected countries (Japan, USA, Russia, Brazil, China, Italy, Indonesia, Spain, South Korea, UK, and Sweden) can be modeled using the Gompertz function. They also compared the mechanism of the appearance of the Gompertz function with the mechanism of the time dependence of the number of pions produced in nucleus-nucleus collisions, which is also described by the Gompertz function. Dhahbi et al \cite{DCB} used the Gompertz model to describe the first wave of cases in Saudi Arabia. Kundu et al \cite{KBS} proposed an
automated COVID-19 detection system based on convolution neural networks using the Gompertz function.  Estrada and Bartesaghi \cite{EB} linked the networked SIS model with the Gompertz function.

The structure of the article is as follows. In Sec.~\ref{sec2} we describe the basic properties of the Gompertz function and its derivatives. Sec.~\ref{sec3} is devoted to the Gompertz wavelets based on the second derivative, that we use later. Then, in the same section, we define the Gompertz wavelets for any derivative and prove that the admissibility condition holds for them. In Sec.~\ref{sec4} we present some applications of the Gompertz wavelets, in particular for modeling the spread of the Covid-19 pandemic. The paper is concluded in Sec.~\ref{sec5}. All the data, which we analyze, were taken from the website Our World in Data \cite{Our}. 

\section{The Gompertz function and its derivatives}\label{sec2}
By  $ { n \brace k}$ we denote the Stirling number of the second kind (for subsets), which is defined as the number of ways of partitioning a set of $n$ 
elements into $k$ nonempty subsets; see Graham et al \cite{GKP} and Sloane \cite{OEIS} (sequence A008277). The sequence has the boundary conditions: ${ n \brace 0} =0 $ if $n>0$, ${ 0 \brace 0} =1 $,  ${ n \brace k} =0 $ for $k>n$ or $k<0$.
Let us recall that the numbers fulfill 
\begin{align*}
{ n \brace k} 
	&=\frac{1}{k!}\sum_{j=0}^{k}(-1)^{k-j}{k \choose j}j^n=
	\frac{1}{k!}\sum_{j=0}^{k}(-1)^{j}{k \choose j}(k-j)^{n},\\
	{ n+1 \brace k} &=k{ n \brace k}+ { n \brace k-1},
\end{align*}
and appear in the Taylor expansion
\begin{equation*}
\frac{(e^{w}-1)^{k}}{k!}=	\sum_{n=k}^{\infty }{ n \brace k} 
\frac{w^{n}}{n!}.
\end{equation*}
First few Stirling numbers for subsets are given in Table~\ref{tab1}.
\begin{table}
\caption{First few Stirling numbers of the second kind (Stirling numbers for subsets)  ${n \brace k }$}\label{tab1}
\begin{center}
\begin{tabular}{c|ccccccccc}
\hline
$n\backslash k$ & 0 & 1 & 2 & 3 & 4 & 5 &6 &7\\
\hline
0  & 1 &  &  & &  &  &&\\ 
1  & 0 & 1 & &  &  &  &&\\ 
2  & 0 & 1 & 1 &  &  &  &&\\ 
3  & 0 & 1 & 3 & 1 &  &  &&\\ 
4 & 0 & 1 & 7 & 6 & 1 & & &\\
5 & 0  & 1 & 15 & 25 & 10 & 1& &\\
6& 0 & 1  & 31 & 90 & 65 & 15 &1&\\
7 & 0  & 1 & 63 &  301&  350& 140 &21&1\\			
\hline
\end{tabular}
\end{center}
\end{table}

Rz\c{a}dkowski et al \cite{Rz1} proved, among other results, that if $x(t)$ is a solution of the equation (\ref{2a}) then its
$n$th derivative $x^{(n)}(t)$ can be expressed as
\begin{equation}\label{2c}
x^{(n)}(t)=s^nx\sum\limits_{k=1}^{n}(-1)^{n-k}{n \brace k}\log^{k}\frac{x_{max}}{x}.
\end{equation}
For example
\begin{align}
x''(t)&=s^2x\log\frac{x_{max}}{x}\left(-1+\log\frac{x_{max}}{x}\right),\label{2d}\\
x'''(t)&=s^3x\log\frac{x_{max}}{x}\left(1-3\log\frac{x_{max}}{x}+\log^2\frac{x_{max}}{x}\right).\label{2e}
\end{align}
From formula (\ref{2d}) we obtain the well-known property of the Gompertz function that its value at the inflection point $t_0$ equals $x(t_0)=x_{max}/e \approx 0.368x_{max}$. Similarly, denoting by $t_1$ the smaller of two zeros of the third derivative (\ref{2e}), we get $x(t_1)=\exp(-(3+\sqrt{5})/2)\approx 0.0729x_{max} $. Comparing this with (\ref{2b}) and solving equation
\[ x(t_1)=x_{max}\exp(-(3+\sqrt{5})/2)=x_{max}\exp(-\exp(-s(t_1-t_0)))\] we obtain (cf. Figure~\ref{fig1})
\begin{equation*}
t_1=t_0-\frac{1}{s}\log\frac{3+\sqrt{5}}{2}.
\end{equation*}
Further comments on this can be found in paper  Rz\c{a}dkowski et al \cite{Rz2}.

\begin{figure}
	\begin{center}
	 \includegraphics[height=6cm, width=8cm]{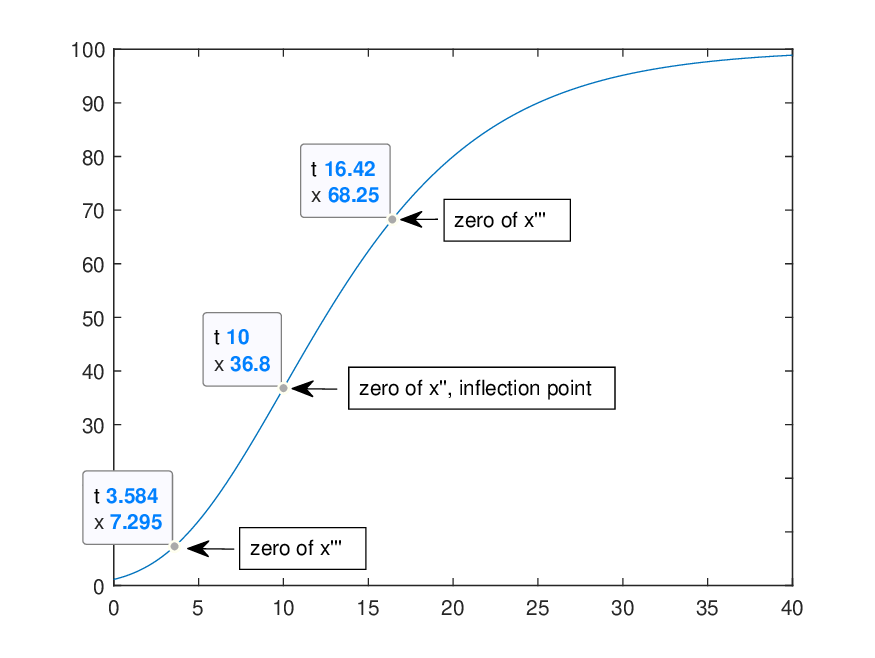}
	\end{center}
	\vspace{0mm}
	\caption{Exemplary Gompertz function with parameters $x_{max}=100, s=0.15, t_0=10$ }
	\label{fig1}
\end{figure}

\section{The Gompertz wavelets}\label{sec3}
\subsection{Wavelets}
We briefly outline the basic general properties of wavelets (cf. \cite{D, MR, M}), which we will need later. A wavelet or mother wavelet (see Daubechies \cite{D}, p.24 ) is a function $\psi \in L^{1}(\mathbb{R})$ such that the following
admissibility condition holds:
\begin{equation}\label{3a}
	C_{\psi}=2\pi\int_{-\infty}^{\infty}|\xi|^{-1}|\widehat{\psi}(\xi)|^2 d\xi < \infty,
\end{equation}
where $\widehat{\psi}(\xi)$ is the Fourier transform $F(\psi)$ of $\psi$, i.e., 
\[
	F(\psi)(\xi)=\widehat{\psi}(\xi)=\frac{1}{\sqrt{2\pi}}\int_{-\infty}^{\infty}\psi(x)e^{-i\xi x} dx. \]
	
Since for $\psi \in L^{1}(\mathbb{R})$, $\widehat{\psi}(\xi)$ is continuous then condition (\ref{3a}) is only satisfied if $\widehat{\psi}(0)=0$, which is equivalent to $\int_{-\infty}^{\infty}\psi(x)dx=0$. On the other hand, Daubechies \cite{D}, p.24  points out that condition $\int_{-\infty}^{\infty}\psi(x)dx=0$  together with a slightly stronger than the integrability condition  $\int_{-\infty}^{\infty}|\psi(x)|(1+|x|)^{\alpha}dx<\infty$, for some $\alpha>0$  are sufficient for (\ref{3a}). Usually, in practice much more is assumed for the function $\psi$ hence, from a practical point of view, conditions $\int_{-\infty}^{\infty}\psi(x)dx=0$ and (\ref{3a}) are equivalent. Suppose the function $\psi$  is also square-integrable, $\psi \in L^{2}(\mathbb{R})$ with the norm
\[ || \psi ||=\left(\int_{-\infty}^{\infty}|\psi(x)|^2 dx \right)^{1/2}. \]

Having a mother wavelet we can generate a doubly-indexed family of wavelets (called children wavelets), by dilating and translating
	\[\psi^{a,b}(x)=\frac{1}{\sqrt{|a|}}\psi\Big(\frac{x-b}{a}\Big),
\]
where $a,b\in \mathbb{R}, \; a\neq 0$. The normalization has been chosen so that $||\psi^{a,b}||=||\psi||$ for all $a, b$. It is assumed usually that $||\psi||=1 $. The continuous wavelet transform (CWT) of a function $f \in L^{2}(\mathbb{R})$ for this wavelet
family is defined as
\begin{equation}\label{3b}
(T^{wav}f)(a,b)=\langle f, \psi^{a,b} \rangle =\int_{-\infty}^{\infty}f(x) \psi^{a,b}(x) dx.
\end{equation}

\subsection{Wavelets based on the second derivative of the Gompertz function}
Consider second derivative of the Gompertz function (\ref{2b}) with parameters $x_{max}=1, s=1, t_0=0$, $x(t)=e^{-e^{-t}}$. Since
 $x'(t)=-x\log x=e^{-e^{-t}}e^{-t} $, then by (\ref{2d}) or directly we get
\begin{equation}\label{3c}
	x''(t)=x\log x(1+\log x)=e^{-e^{-t}}e^{-t} (e^{-t}-1).
\end{equation}

Note that $x'(t)=e^{-e^{-t}}e^{-t} $ is also the probability density function (pdf) of the Gumbel distribution. We calculate the three, following integrals related to (\ref{3c}), each by substituting $x=e^{-e^{-t}}, x'(t)=-x\log x=e^{-e^{-t}}e^{-t}$:
\begin{align}
	&\int_{-\infty}^{\infty}x''(t) dt=\int_{-\infty}^{\infty}e^{-e^{-t}}e^{-t} (e^{-t}-1)dt= -\int_{0}^{1}(\log x+1)dx=(-x\log x)|_{0}^{1}=0,\label{3d}\\
	&\int_{-\infty}^{\infty}|x''(t)| dt=\int_{-\infty}^{\infty}e^{-e^{-t}}e^{-t} |e^{-t}-1| dt=\int_{-\infty}^{0}e^{-e^{-t}}e^{-t} (e^{-t}-1) dt+\int_{0}^{\infty}e^{-e^{-t}}e^{-t} (1-e^{-t}) dt\nonumber\\
	&=-\int_{0}^{1/e}(\log x+1)dx+\int_{1/e}^{1}(\log x+1)dx=\frac{1}{e}+\frac{1}{e}=\frac{2}{e},\label{3e}\\
	&\int_{-\infty}^{\infty}(x''(t))^2 dt=\int_{-\infty}^{\infty}(e^{-e^{-t}})^2(e^{-t})^2 (e^{-t}-1)^2dt=-\int_{0}^{1}x\log x (1+\log x)^2 dx=\frac{1}{8}. \label{3f}
\end{align}

Let us define now the Gompertz mother  wavelet $\psi_2(t)$ (Figure~\ref{fig2}) by:
\begin{equation}\label{3g}
\psi_2(t)=2\sqrt{2}x''(t)=2\sqrt{2}e^{-e^{-t}}e^{-t} (e^{-t}-1), \quad t\in \mathbb{R}.
\end{equation}

\begin{figure}
	\begin{center}
	 \includegraphics[height=6cm, width=8cm]{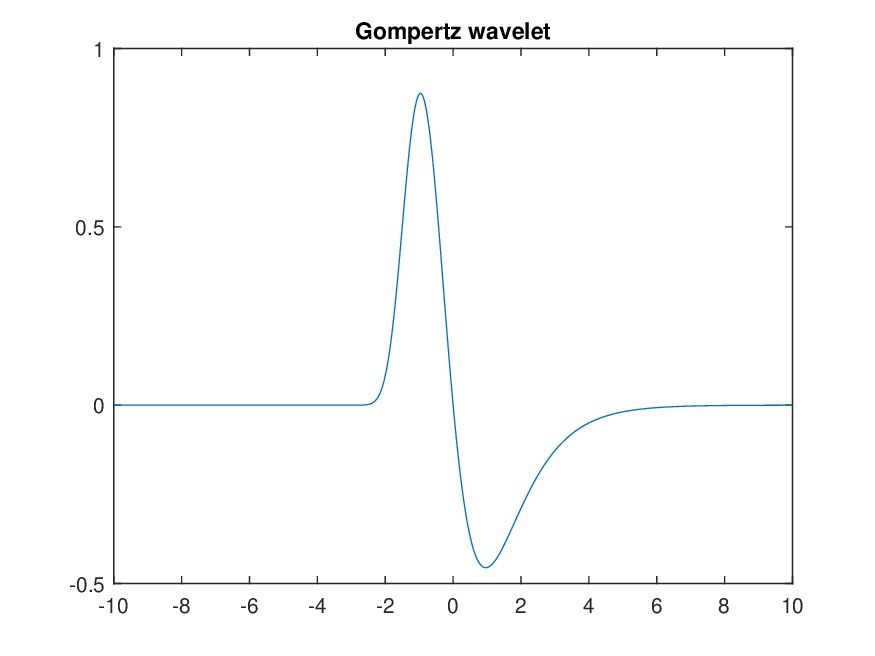}
	\end{center}
	\vspace{0mm}
	\caption{Gompertz mother wavelet $\psi_2(x)$ }
	\label{fig2}
\end{figure}

By definition (\ref{3g}) and (\ref{3d}-\ref{3f}) we have 
\[\int_{-\infty}^{\infty}\psi_2(t) dt=0,\; ||\psi_2||=1,\; \psi_2(t)\in L^{1}(\mathbb{R})\cap L^{2}(\mathbb{R}).\]

Although it is easy to prove that in our case, the sufficient conditions for the admissibility condition (\ref{3a}), described in the previous subsection are fulfilled, we will show a direct and interesting calculation to prove (\ref{3a}). Let us start with an observation concerning the Euler Gamma function
\begin{equation}\label{3h}
\Gamma (z)=\int_{0}^{\infty}x^{z-1}e^{-x} dx,\quad Re(z)>0.
\end{equation}
After substituting in (\ref{3h}) $z=1+i\xi $ and changing the integration variable $x=\exp(-t)$ we obtain
\begin{equation*}
\Gamma (1+i\xi)=\int_{0}^{\infty}x^{i\xi}e^{-x} dx=\int_{\infty}^{-\infty}e^{-i\xi t } e^{-e^{-t}} (-e^{-t}) dt =
\int_{-\infty}^{\infty} e^{-e^{-t}} e^{-t}e^{-i\xi t } dt,
\end{equation*}
which shows that the Fourier transform of the first derivative of the Gompertz function $x'(t)= e^{-e^{-t}}e^{-t}$ is as follows
\begin{equation}\label{3j}
F(x')(\xi)=\widehat{x'}(\xi)=\frac{1}{\sqrt{2\pi}}\Gamma (1+i\xi).
\end{equation}

Using the formula for the Fourier transform of a derivative,  (\ref{3j}) and then definition  (\ref{3g}) we get 
\begin{equation*}
F(x'')(\xi)=\widehat{x''}(\xi)=\frac{i\xi}{\sqrt{2\pi}}\Gamma (1+i\xi),
\end{equation*}
and
\begin{equation}\label{3m}
F(\psi_2)(\xi)=\widehat{\psi_2}(\xi)=\frac{2i\xi}{\sqrt{\pi}}\Gamma (1+i\xi).
\end{equation}

Now we can show that for the Gompertz mother wavelet $\psi_2(t)$ the admissibility condition (\ref{3a}) is satisfied and even the integral can be expressed in a closed form in terms of the Riemann Zeta function $\zeta (z)$. Namely, using (\ref{3m}), the well-known property of the Gamma function
\begin{equation}\label{3k}
 |\Gamma (1+i\xi)|^2=\Gamma (1+i\xi)\Gamma (1-i\xi) =\frac{\pi\xi}{\sinh \pi\xi}, 
 \end{equation}
and the following formula from Dwight's Tables \cite{Dw} (item no $860.502$):
 	\begin{equation*}
		\int_{0}^{\infty}\frac{x^{2}}{\sinh ax} dx=\frac{7}{2a^{3}}\zeta(3),
	\end{equation*}
we obtain
\begin{align}\label{3n}
	C_{\psi_2}&=2\pi\int_{-\infty}^{\infty}|\xi|^{-1}|\widehat{\psi_2}(\xi)|^2 d\xi =
	2\pi\int_{-\infty}^{\infty}\frac{1}{|\xi|}\left|\frac{2i\xi}{\sqrt{\pi}}\Gamma (1+i\xi)\right|^2d\xi=
	2\pi\int_{-\infty}^{\infty}\frac{1}{|\xi|}\frac{4\xi^2}{\pi}\frac{\pi\xi}{\sinh \pi \xi}d\xi \\
	&=2\pi\int_{-\infty}^{\infty}\frac{4\xi^2}{|\sinh \pi\xi |} d\xi=\frac{56\zeta (3)}{\pi^2}<\infty .
\end{align}

We can now generate a doubly-indexed family of the Gompertz wavelets from the mother Gompertz wavelet $\psi_2$ by dilating and translating
	\[\psi_2^{a,b}(t)=\frac{1}{\sqrt{a}}\psi_2\Big(\frac{t-b}{a}\Big),
\]
where $a,b\in \mathbb{R}, \; a> 0$.

\subsection{Wavelets based on higher derivatives of the Gompertz function}
Similarly, as in the previous subsection, we consider the $n$th ($n=3,4,...$) derivative $x^{(n)}(t)$ of the Gompertz function  $x(t)=e^{-e^{-t}}$ with parameters $x_{max}=1, s=1, t_0=0$.  For this particular case formula (\ref{2c}) reads
\begin{equation}\label{4a}
x^{(n)}(t)=(-1)^{n}x(t)\sum\limits_{k=1}^{n}{n \brace k}\log^{k}x(t).
\end{equation}

Since
\begin{align*}
\int_{-\infty}^{\infty}|x^{(n)}(t)| dt=&\int_{-\infty}^{\infty}\Big|(-1)^{n}x(t)\log x(t)\sum\limits_{k=1}^{n}{n \brace k}\log^{k-1}x(t)\Big| dt
=\int_{0}^{1}\Big|\sum\limits_{k=1}^{n}{n \brace k}\log^{k-1}x\Big| dx \\
&\le \sum\limits_{k=1}^{n}{n \brace k}\int_{0}^{1}\Big|\log^{k-1}x\Big| dx=
 \sum\limits_{k=1}^{n}{n \brace k}(k-1)! < \infty 
 \end{align*}
we see that $x^{(n)}(t) \in  L^{1}(\mathbb{R})$.

Rz\c{a}dkowski et al \cite{Rz1} (Theorem 3.2, p. 377) proved the following formula for derivatives of the Gompertz function $x(t)=e^{-e^{-t}}$:
\begin{equation}\label{4b}
\int_{-\infty}^{\infty}(x^{(n)}(t))^2 dt=(-1)^{n}\frac{B_{2n}(1-2^{2n})}{2n}=\frac{|B_{2n}|(2^{2n}-1)}{2n},\quad n=1,2,\ldots,
\end{equation}
where $B_{n}$ is the $n$th Bernoulli number. The Bernoulli numbers are well described in the book by Duren \cite{Dr}. For the convenience of the reader, we sketch here only some of their basic properties.  

The Bernoulli numbers $B_{n}, n=1,2,\ldots$ have the following exponential  generating function 
\begin{equation*}
	B_0+B_{1}z + B_{2}\frac{z^{2}}{2!}+\cdots =\frac{z}{e^{z}-1},
	\qquad |z|<2\pi.
\end{equation*}
and vanish for all odd $n\ge 3$. The numbers are rational and they appear in relations such that
\[\sum_{k=1}^{m-1}k^{n}=\frac{1}{n+1}\sum_{j=0}^{n}{n+1 \choose j} B_{j}m^{n+1-j},\quad m,n\ge 1,
\]
or	
	\[\sum_{k=1}^{\infty}\frac{1}{k^{2n}}=(-1)^{n+1}
	\frac{2^{2n-1}\pi^{2n}}{(2n)!}B_{2n}, \quad n=1,2,\ldots
\]
The first few nonzero Bernoulli numbers are as follows
	\[B_{0}=1,\;B_{1}=-\frac{1}{2},\;B_{2}=\frac{1}{6},\;B_{4}=-\frac{1}{30}
,\;B_{6}=\frac{1}{42},\;B_{8}=-\frac{1}{30},\;B_{10}=\frac{5}{66}.
\]
Note that in case $n=2$ formula (\ref{4b}) agrees with calculation (\ref{3f}) because
\[(-1)^{2}\frac{B_{4}(1-2^{4})}{4}=\frac14 \cdot \Big(-\frac{1}{30} \Big)\cdot (-15)=\frac18. \]

We can define now the Gompertz mother wavelet  $\psi_n(t)$, related to the $n$th derivative of the Gompertz function (\ref{4a}) as
\begin{equation}\label{4c}
    \psi_n(t)=\Big(\frac{2n}{|B_{2n}|(2^{2n}-1)}\Big)^{1/2} x^{(n)}(t).
\end{equation}
Because of (\ref{4b}) the $L^2$ norm $|| \psi_n||=1$. 

We will show now that, for $ \psi_n$ the admissibility condition (\ref{3a}) is fulfilled. Applying in (\ref{3j}) $(n-1)$-times the formula for the Fourier transform of a derivative we get
\begin{equation*}
F(x^{(n)})(\xi)=\widehat{x^{(n)}}(\xi)=\frac{(i\xi)^{n-1}}{\sqrt{2\pi}}\Gamma (1+i\xi),
\end{equation*}
which gives by definition (\ref{4c})
\begin{equation}\label{4d}
F(\psi_{n})(\xi)=\widehat{\psi_{n}}(\xi)=\Big(\frac{n}{|B_{2n}|(2^{2n}-1)\pi}\Big)^{1/2}(i\xi)^{n-1}\Gamma (1+i\xi).
\end{equation}
Using  (\ref{3k}) and the following formula from Dwight's Tables \cite{Dw} (item no $860.509$):
 	\begin{equation*}
		\int_{0}^{\infty}\frac{x^{p-1}}{\sinh ax} dx=\frac{2\Gamma(p)}{a^{p}}\Big( 1-\frac{1}{2^p}\Big)\zeta(p), \quad a>0, p>1
	\end{equation*}
	we obtain
\begin{align}\label{}
	C_{\psi_n}&=2\pi\int_{-\infty}^{\infty}|\xi|^{-1}|\widehat{\psi_n}(\xi)|^2 d\xi =
	2\pi\int_{-\infty}^{\infty}\frac{1}{|\xi|}\cdot \frac{n}{|B_{2n}|(2^{2n}-1)\pi}\cdot \xi^{2n-2}\cdot \frac{\pi\xi}{\sinh \pi\xi}d\xi\\
	&= \frac{2\pi n}{|B_{2n}|(2^{2n}-1)}\int_{-\infty}^{\infty}\frac{\xi^{2n-2}}{|\sinh \pi\xi|}=
	\frac{2\pi n}{|B_{2n}|(2^{2n}-1)}\cdot \frac{4\Gamma(2n-1)}{\pi^{2n-1}}\Big( 1-\frac{1}{2^p}\Big)\zeta(2n-1)\\
	&=\frac{ n(2^{2n-1}-1)\Gamma(2n-1)}{|B_{2n}|(2^{2n}-1)\cdot 2^{2n-4}\cdot \pi^{2n-2}}\cdot \zeta(2n-1) <\infty,
\end{align}	
	where $\zeta(z)$ is the Riemann Zeta function.

As usually, we generate from the mother wavelet a doubly-indexed family of wavelets from $\psi_n$ by dilating and translating
	\[\psi_n^{a,b}(t)=\frac{1}{\sqrt{a}}\psi_n\Big(\frac{t-b}{a}\Big),
\]
where $a,b\in \mathbb{R}, \; a > 0, \; n=2,3,\ldots$.

\section{Applications}\label{sec4}
We will look, in a time series  $(y_n)$, for points corresponding to zeros of the second or the third derivative of the Gompertz function. This is equivalent to detecting points, where the sequence of second differences, 
	\[\Delta^2y_n=y_{n+1}-2y_n+y_{n-1},
\]
takes a value close to zero or a maximum respectively.

To calculate the CWT (Continuous Wavelet Transform) coefficients for $(\Delta^2y_n)$, we implement the mother wavelet $\psi_2(t)$, (\ref{3g}) into Matlab's wavelet toolbox. Two parameters of the Gompertz wavelet, $b$ - shift (translation) and $a$ - dilation, can be read from the CWT scalogram by finding a point where the sum (\ref{5a}) (denoted on the scalogram by Index) is locally maximal. It remains to determine the third parameter of the wave, i.e., its saturation level $y_{max}$. Assume that time series $(y_n)$ locally follows the Gompertz function $
\displaystyle y_n\approx y(n)=y_{max}e^{-e^{-\frac{n-b}{a}}}$. By definition  (\ref{3g}) we have
	
	\[ y''(t)=\frac{y_{max}}{2\sqrt{2}a^{3/2}}\psi_2^{a,b}(t).
\]
\begin{lemma*}
The continuous wavelet transform CWT (\ref{3b}) of the function  $y''(t)$, by using Gompertz wavelets $\psi_2^{c,d}$
\[(T^{wav}y'')(c,d)=\langle y'', \psi_2^{c,d} \rangle =\int_{-\infty}^{\infty} y''(t) \psi_2^{c,d}(t) dt,
\]

takes the maximum value when $c=a$ and $d=b$.
\end{lemma*}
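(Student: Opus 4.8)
The plan is to reduce the statement to a comparison of inner products of the children wavelets and then to apply the Cauchy--Schwarz inequality. First I would substitute the relation displayed just above the statement, $y''(t)=\frac{y_{max}}{2\sqrt{2}\,a^{3/2}}\psi_2^{a,b}(t)$, into the definition of the transform, which gives
\[
(T^{wav}y'')(c,d)=\frac{y_{max}}{2\sqrt{2}\,a^{3/2}}\,\langle \psi_2^{a,b},\psi_2^{c,d}\rangle .
\]
Since $y_{max}>0$ and $a>0$ are fixed, maximizing the left-hand side over $(c,d)$ is equivalent to maximizing $\langle \psi_2^{a,b},\psi_2^{c,d}\rangle$.

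Next I would use that the normalization defining the children wavelets preserves the $L^2$-norm, so that $\|\psi_2^{c,d}\|=\|\psi_2\|=1$ for every admissible pair and likewise $\|\psi_2^{a,b}\|=1$. By Cauchy--Schwarz,
\[
\langle \psi_2^{a,b},\psi_2^{c,d}\rangle \le \|\psi_2^{a,b}\|\,\|\psi_2^{c,d}\|=1,
\]
with equality in this (non-absolute-value) form precisely when $\psi_2^{c,d}$ is a nonnegative multiple of $\psi_2^{a,b}$; as both functions are of unit norm, equality forces $\psi_2^{c,d}=\psi_2^{a,b}$. The bound is attained, because $\langle \psi_2^{a,b},\psi_2^{a,b}\rangle=\|\psi_2^{a,b}\|^2=1$, so the maximal value is reached at $(c,d)=(a,b)$.

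It then remains to show this maximizer is unique, i.e.\ that the map $(c,d)\mapsto\psi_2^{c,d}$ is injective. For this I would exploit the explicit profile $\psi_2(u)=2\sqrt{2}\,e^{-e^{-u}}e^{-u}(e^{-u}-1)$: the prefactor $e^{-e^{-u}}e^{-u}$ is strictly positive for all finite $u$, so $\psi_2$ has a single finite zero, located at $u=0$. Consequently $\psi_2^{c,d}$ vanishes at a finite point exactly at $t=d$, and equality of two children wavelets forces $d=b$. With $d=b$ fixed, comparing supremum norms, $\|\psi_2^{c,d}\|_\infty=c^{-1/2}\|\psi_2\|_\infty$, yields $c^{-1/2}=a^{-1/2}$ and hence $c=a$; equivalently one may match the unique (negative) abscissa of the positive peak of $\psi_2$. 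Either way $(c,d)=(a,b)$ is the only solution, so it is the strict global maximizer.

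I expect the only delicate point to be this last injectivity step, since the Cauchy--Schwarz argument by itself shows merely that the maximum is attained on the set $\{(c,d):\psi_2^{c,d}=\psi_2^{a,b}\}$; it is the asymmetric, one-zero shape of $\psi_2$ that collapses this set to a single point. If a Fourier-analytic route is preferred, the same conclusion follows from $\widehat{\psi_2^{c,d}}(\xi)=\sqrt{c}\,e^{-id\xi}\widehat{\psi_2}(c\xi)$ together with the explicit transform~(\ref{3m}) and the identity~(\ref{3k}): equality of the moduli forces $c=a$, after which equality of the phases forces $d=b$.
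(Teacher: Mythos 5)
Your proof is correct and follows essentially the same route as the paper: substitute $y''=\frac{y_{max}}{2\sqrt{2}\,a^{3/2}}\psi_2^{a,b}$, bound the transform via Cauchy--Schwarz using the normalization $\|\psi_2^{c,d}\|=\|\psi_2^{a,b}\|=1$, and observe that the bound is attained at $(c,d)=(a,b)$ by direct evaluation of $\langle \psi_2^{a,b},\psi_2^{a,b}\rangle$. Your extra injectivity step (the single finite zero of $\psi_2$ pinning down $d$, then the scaling of the norm pinning down $c$) establishes strict uniqueness of the maximizer, which the paper's proof asserts only implicitly --- a sound and worthwhile addition, not a deviation.
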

\begin{proof}
By the Cauchy-Schwartz inequality
\[|(T^{wav}y'')(c,d)|=|\langle y'', \psi_2^{c,d} \rangle |\le ||y''|| \;||\psi_2^{c,d}||=\frac{y_{max}}{2\sqrt{2}a^{3/2}}||\psi_2^{a,b}||\;||\psi_2^{c,d}||=\frac{y_{max}}{2\sqrt{2}a^{3/2}}.
\]
However the maximum is reached for $c=a$, $d=b$, because:
\[(T^{wav}y'')(a,b)=\langle y'', \psi_2^{a,b} \rangle =\frac{y_{max}}{2\sqrt{2}a^{3/2}}\langle \psi_2^{a,b} , \psi_2^{a,b} \rangle=\frac{y_{max}}{2\sqrt{2}a^{3/2}}.\]
\end{proof}

In view of Lemma, for the maximal value of Index we get successively

\begin{align}
	\textrm{Index}=\sum\limits_{n}&\Delta^2y_n\psi_2^{a,b}(n)\approx \sum\limits_{n}\Delta^2y(n)\psi_2^{a,b}(n)
	\approx \int_{-\infty}^{\infty}y''(t)\psi_2^{a,b}(t)dt
	 =\int_{-\infty}^{\infty} \frac{y_{max}}{2\sqrt{2}a^{3/2}}\psi_2^{a,b}(t)\psi_2^{a,b}(t)dt \label{5a}\\
	&=\frac{y_{max}}{2\sqrt{2}a^{3/2}}
	\int_{-\infty}^{\infty}(\psi_2^{a,b}(t))^2 dt= \frac{y_{max}}{2\sqrt{2}a^{3/2}}. \nonumber
\end{align}

Using (\ref{5a}) we can estimate the saturation level $y_{max}$ as follows
\begin{equation}\label{5b}
y_{max}\approx 2\sqrt{2}a^{3/2}\sum\limits_{n}\Delta^2y_n\psi_2^{a,b}(n)= 2\sqrt{2}a^{3/2}\textrm{Index}.
\end{equation}

\subsection{The case of two exact Gompertz functions}
For illustration, consider function  $y(t)$ composed of two Gompertz functions, Fig.~\ref{f3a}:
\[y(t)=100000e^{-e^{-\frac{t-25}{8}}}+200000e^{-e^{-\frac{t-200}{20}}}, \quad t\in [0, 350].\]
Denote $y_n=y(n), \;\; n=0,1,2,\ldots 350$, calculate the first differences $\Delta^1y_n=y_n-y_{n-1}, \;\; n=1,2,\ldots 350$ (Fig.~\ref{f3b}) and the central second differences $\Delta^2y_n=y_{n+1}-2y_n+y_{n-1}, \;\; n=1,2,\ldots, 349$ (Fig.~\ref{f3c}) .

\begin{figure}[H]
\centerline{}
\centerline{}
\centering
\begin{tabular}{ll}
\begin{subfigure}{7cm}
\centering
\includegraphics[width=7cm,height=5cm]{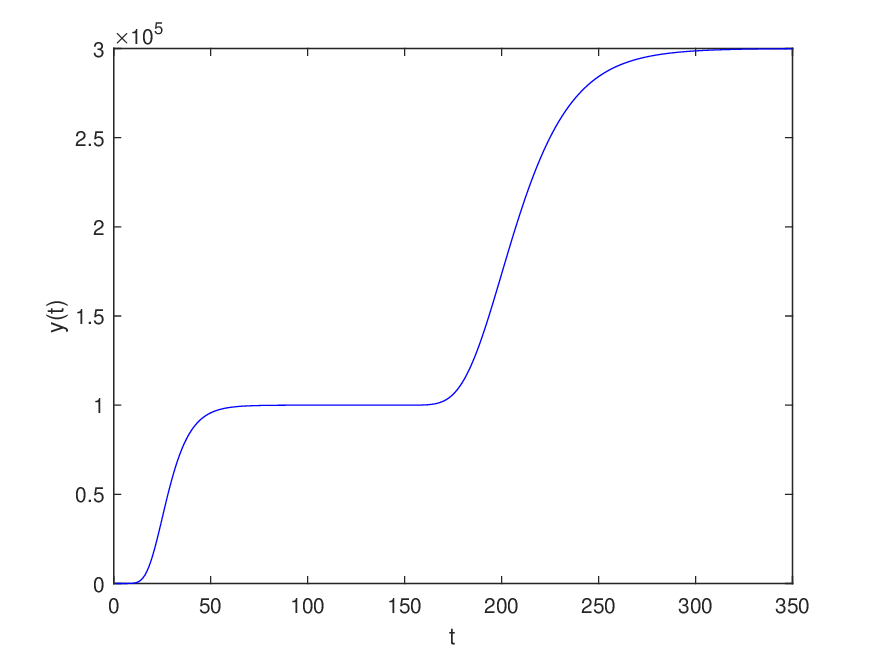}
\caption{Function $y(t)$}
\label{f3a}
\end{subfigure}
&
\begin{subfigure}{7cm}
\centering
\includegraphics[width=7cm,height=5cm]{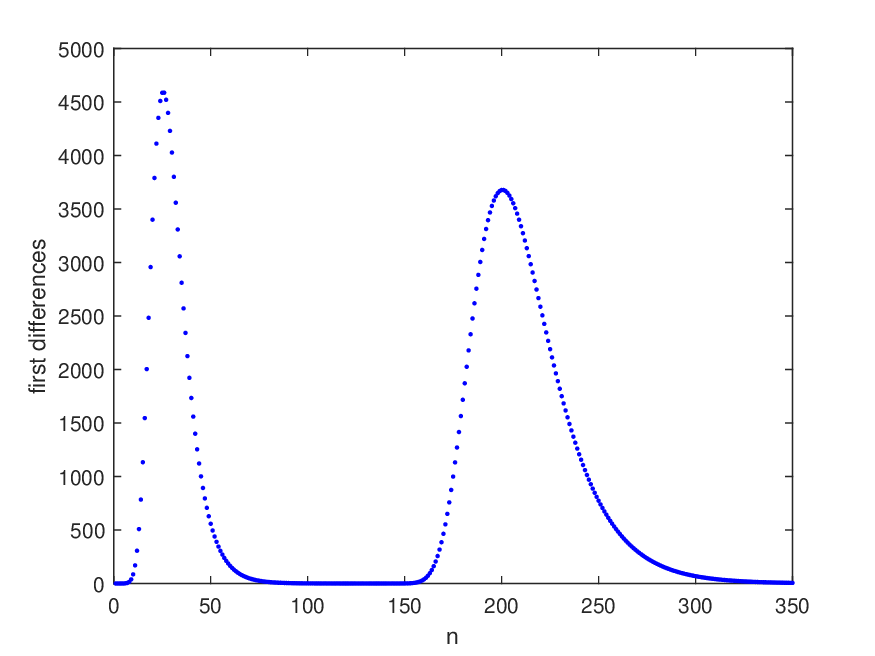}
\caption{First differences $\Delta^1y_n$}
\label{f3b}
\end{subfigure}
\end{tabular}
\centerline{}
\centerline{}
\centering
\begin{tabular}{ll}
\begin{subfigure}{7cm}
\centering
\includegraphics[width=7cm,height=5cm]{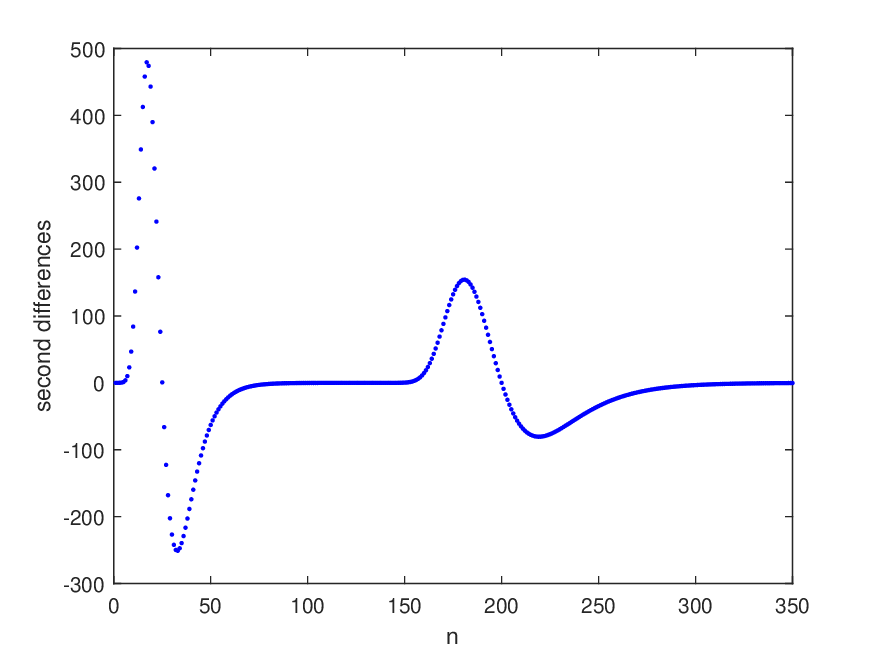}
\caption{ Second differences  $\Delta^2y_n$}
\label{f3c}
\end{subfigure}
&
\begin{subfigure}{7cm}
\centering
\includegraphics[width=7cm,height=5cm]{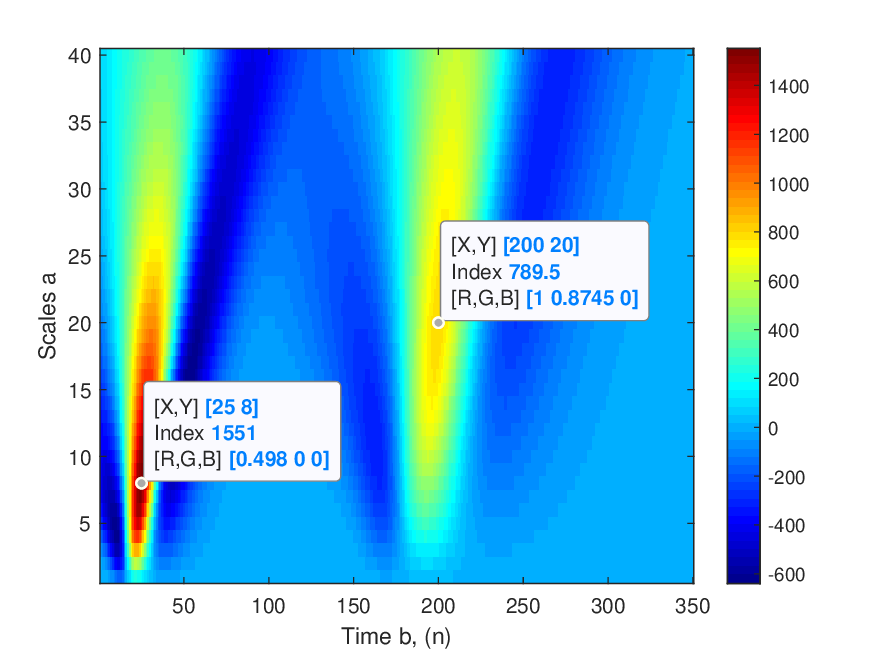}
\caption{CWT scalogram for second differences}
\label{f3d}
\end{subfigure}
\end{tabular}
\caption{Graphs and the CWT scalogram for the sum of two exact Gompertz waves}
\label{fig3}
\end{figure}
The CWT applied to the second differences produces scalogram, shown in  Fig.~\ref{f3d}. We can read from it two parameters $b=25, a=8$ for the first Gompertz wave and similarly $b=200, a=20$ for the second. At these points, the value of the Index is locally the largest. Both the saturation levels can be calculated using formula (\ref{5b}). The saturation level for the first wave is
\begin{equation}\label{5c}
y_{max}\approx 2\sqrt{2}a^{3/2}\sum\limits_{n}\Delta^2y_n\psi_2^{a,b}(n)=2\sqrt{2}\cdot 8^{3/2}\cdot 1551=99,264,
\end{equation}
and analogously for the second
\begin{equation}\label{5d}
y_{max}\approx 2\sqrt{2}a^{3/2}\sum\limits_{n}\Delta^2y_n\psi_2^{a,b}(n)=2\sqrt{2}\cdot 20^{3/2}\cdot 789.5=199,729.
\end{equation}
Since the second differences (not the second derivatives) were used for the analysis, then the calculated saturation levels (28) and (29) cannot be expected to be exactly equal to the theoretical ones, i.e., 100,000 and 200,000 respectively.

\subsection{Application for the analysis of the spread of Covid-19 cases on the example of Saudi Arabia}
Rz\c{a}dkowski and Figlia \cite{RF} presented the use of logistic wavelets to model the spread of the Covid-19 pandemic in several countries. It turns out that sometimes it is better to use the Gompertz curve, than the logistic one, to model particular waves of the pandemic. Dhabi et al. \cite{DCB} modeled, using the Gompertz function, the extensive first wave of Covid-19 cases in Saudi Arabia. They considered 264 days starting from March 12, 2020, until November 30, 2020.

Let us now examine the spread of Covid-19 for Saudi Arabia over a longer time period, from March 12, 2020, to July 20, 2022 covering 861 days. The time series of the total number of reported infections has been smoothed with a 7-day moving average and then denoted by $(y_n)$. Therefore, $n = 1$ in the series $(y_n)$ means March 18, 2020, and $n = 854$ is the last day covered by the analysis, i.e., July 19, 2022 (necessity to calculate the last second difference). Fig.~\ref{fig4} shows in turn: the series $(y_n)$ (Fig.~\ref{f4a}), first differences (Fig.~\ref{f4b}) and second differences (Fig.~\ref{f4c}).

\begin{figure}[H]
\centerline{}
\centerline{}
\centering
\begin{tabular}{ll}
\begin{subfigure}{7cm}
\centering
\includegraphics[width=7cm,height=5cm]{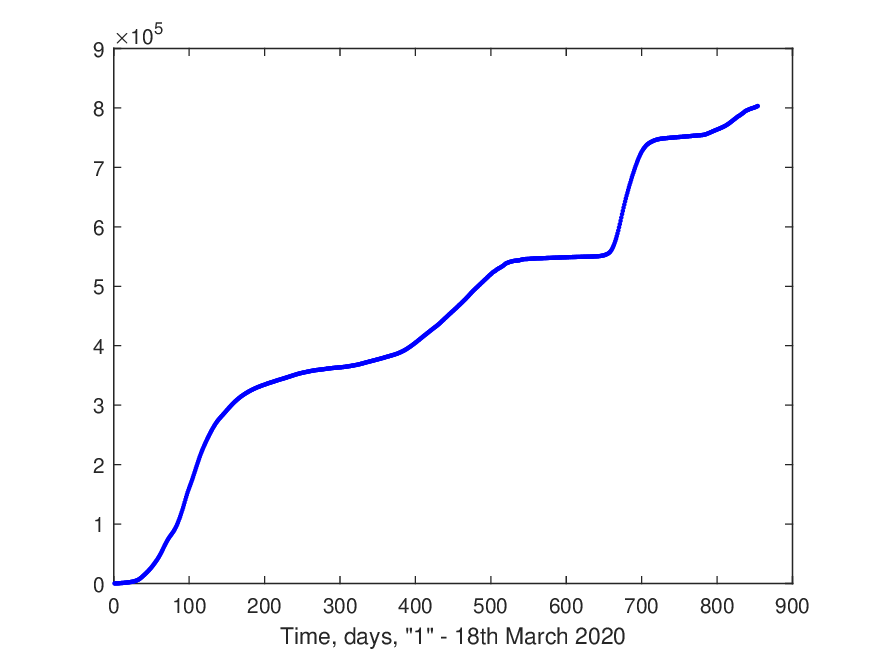}
\caption{Total number of cases in Saudi Arabia, ($y_n$)}
\label{f4a}
\end{subfigure}
&
\begin{subfigure}{7cm}
\centering
\includegraphics[width=7cm,height=5cm]{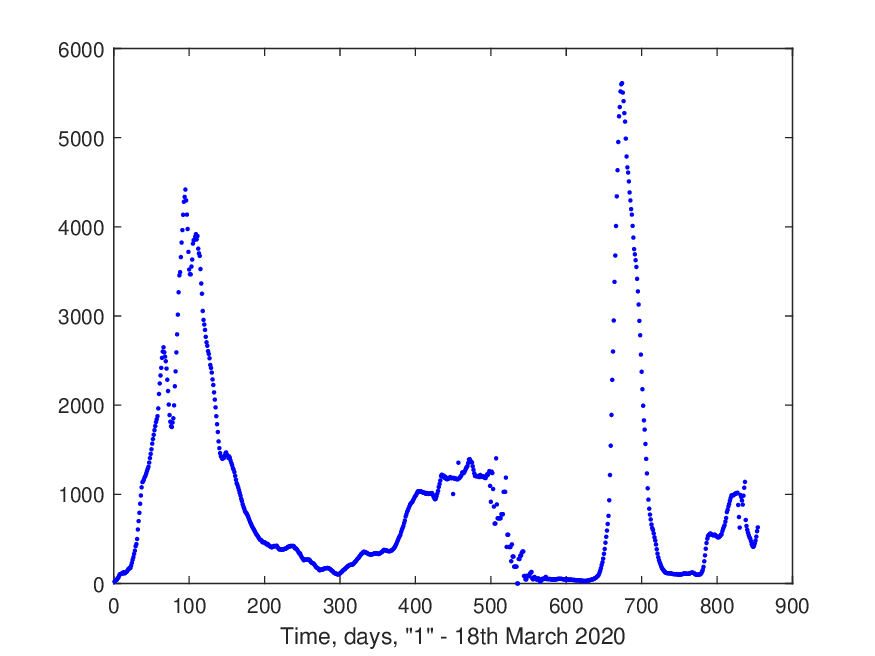}
\caption{First differences, daily cases}
\label{f4b}
\end{subfigure}
\end{tabular}
\centerline{}
\centerline{}
\centering
\begin{tabular}{ll}
\begin{subfigure}{7cm}
\centering
\includegraphics[width=7cm,height=5cm]{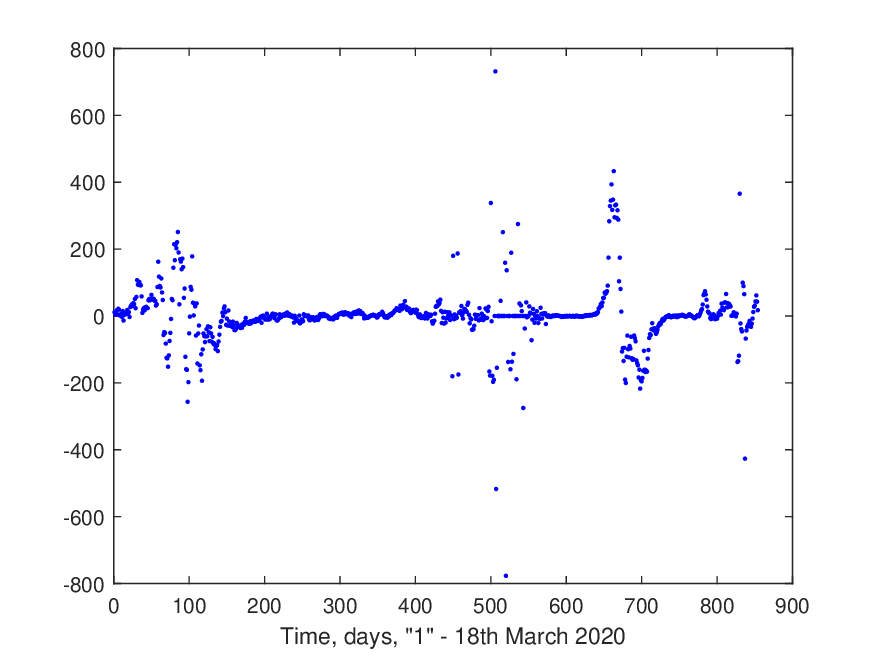}
\caption{ Second differences}
\label{f4c}
\end{subfigure}
&
\begin{subfigure}{7cm}
\centering
\includegraphics[width=7cm,height=5cm]{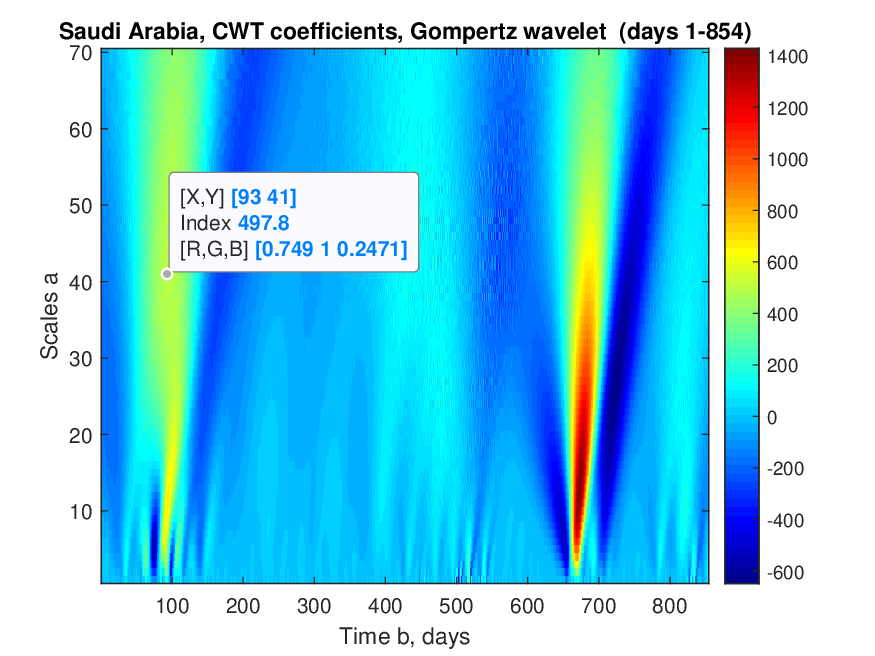}
\caption{CWT scalogram for second differences (c) }
\label{f4d}
\end{subfigure}
\end{tabular}
\caption{Graphs and the CWT  (Gompertz wavelet) scalogram for Saudi Arabia}
\label{fig4}
\end{figure}
The scalogram Fig.~\ref{f4d} gives values of the CWT coefficients for the second differences, using Gompertz wavelets. At the point indicated by the label, there is a maximum of Index of the large wave (consisting of several smaller waves) of cases, studied by Dhahbi et al. \cite{DCB}. The saturation level of this wave calculated by formula (\ref{5b}) is
\begin{equation}\label{6a}
y_{max}\approx 2\sqrt{2}a^{3/2}\sum\limits_{n}\Delta^2y_n\psi_2^{a,b}(n)=2\sqrt{2}\cdot 41^{3/2}\cdot 497.8=369,637.
\end{equation}
The saturation level (\ref{6a}) is consistent with the estimates given by Dhahbi et al.  \cite{DCB} and in accordance with Fig.~\ref{f4a}. We  have to remember that,  this time, the pandemic had been not yet saturated.

Let us examine now the large single wave of cases, which is visible in Fig.~\ref{fig4} after day 600. The asymmetric shapes of the first differences Fig.~\ref{f4b} and the second differences Fig.~\ref{f4c} indicate that the Gompertz function could also be here more efficient for modeling than the logistic curve. This is confirmed by the CWT analysis and can be seen in scalograms Fig.~\ref{fig5}. On the left, there is the CWT analysis using logistic wavelets Fig.~\ref{f5a} and on the right - using Gompertz wavelets Fig.~\ref{f5b}.

\begin{figure}[H]
\centerline{}
\centerline{}
\centering
\begin{tabular}{ll}
\begin{subfigure}{7cm}
\centering
\includegraphics[width=7cm,height=5cm]{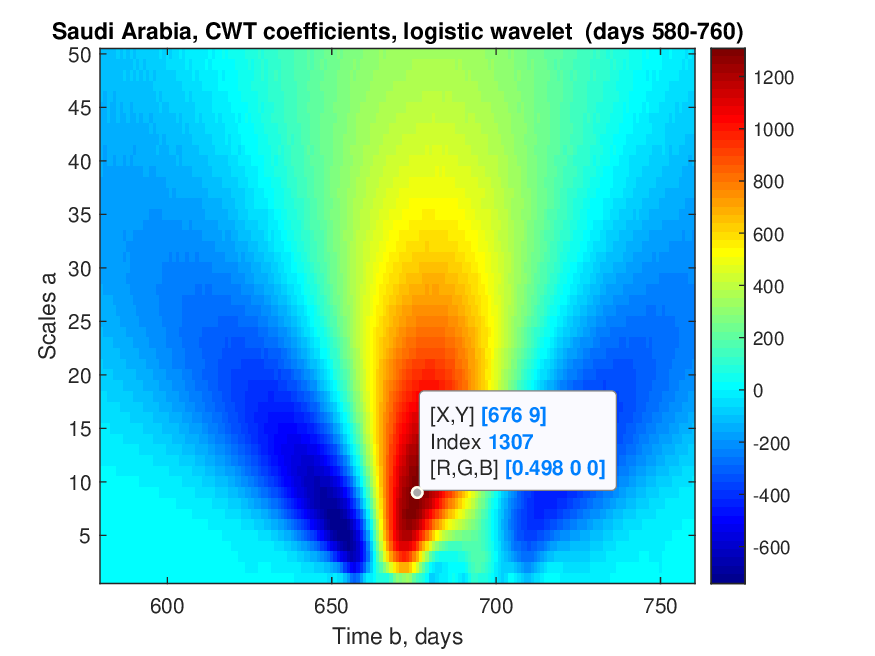}
\caption{CWT analysis, logistic wavelet}
\label{f5a}
\end{subfigure}
&
\begin{subfigure}{7cm}
\centering
\includegraphics[width=7cm,height=5cm]{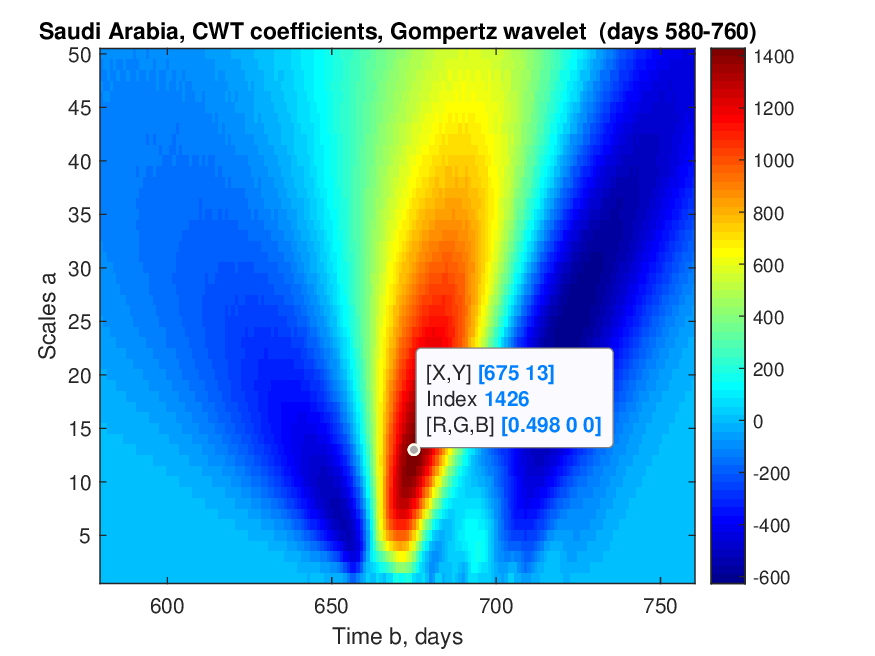}
\caption{CWT analysis, Gompertz wavelet}
\label{f5b}
\end{subfigure}
\end{tabular}
\caption{Comparison of CWT coefficients using the logistic wavelet and the Gompertz wavelet}
\label{fig5}
\end{figure}
 
 The maximum Index  of $1307$ for the logistic wavelet is smaller than the maximum Index of $1426$ for the Gompertz wavelet. Note that both wavelets have the same $L^2$ norm equal to 1. This indicates that the Gompertz wavelet gives a better fit for the points of observation than the logistic one. The saturation level of the wave of cases, calculated by using the formula (\ref{5b}) is

\begin{equation*}
y_{max}\approx 2\sqrt{2}a^{3/2}\sum\limits_{n}\Delta^2y_n\psi_2^{a,b}(n)=2\sqrt{2}\cdot 13^{3/2}\cdot 1426=189,051
\end{equation*}
which is consistent with the observations, as well is the day $n=675$ (January 21, 2022) of the highest smoothed number of cases.

\section{Conclusions and further work}\label{sec5}
In this paper we defined Gompertz wavelets of any order. We have shown that the admissibility condition holds for them. We also calculated their normalizing factors in the space of square intergrable functions  $L^{2}(\mathbb{R})$ and showed that they are expressed by an explicit formula in terms of Bernoulli numbers. Next, we illustrated the utility of second-order Gompertz wavelets in the theoretical situation, where the signal consists of two exact Gompertz functions. Then we used them to study the spread of the Covid-19 pandemic in Saudi Arabia.

In further work, we plan to apply the second-  or  higher-order Gompertz wavelets to some real-world data from the fields such as economics, finance or biology. One could also deal with some generalizations of the basic S-shaped curves to define the corresponding wavelets for them. For this purpose, formulas from the paper Rz\c{a}dkowski and Urlińska  \cite{Rz3} could be used, allowing for efficient computation of successive derivatives for a large class of S-shaped functions.

\vspace{2mm}

{\large\textbf{Conflict of Interests}}  

The author declares that there are no any conflict of interest related to the submitted manuscript.

\vspace{2mm}
{\large\textbf{Funding statement}}

The research was partially funded by the ’IDUB against COVID-19’ project granted by the Warsaw University of Technology (Warsaw, Poland) under the program Excellence Initiative: Research University (IDUB), grant no 1820/54/201/2020.

\end{document}